\journal{}
\newtheorem{remark}{\sc \bf Remark}[section]
\newtheorem{theorem}{\sc \bf Theorem}[section]
\newtheorem{example}{\sc \bf Example}[section]
\newenvironment{proof}{Proof:}{\hfill$\square$}
\begin{document}
\begin{frontmatter}
\title{Imposing various boundary conditions on positive definite kernels}
\author[IKIU]{Babak Azarnavid\corauthref{cor}},
\author[PUT]{Mohammad Nabati},
\author[GUT]{Mahdi Emamjome},
\author[SBU]{Kourosh Parand}
\corauth[cor]{Corresponding author: babakazarnavid@yahoo.com}

\address[IKIU]{Department of Applied Mathematics, Imam Khomeini International
University, Qazvin 34149-16818, Iran}
\address[PUT]{Department of Basic Sciences, Abadan Faculty of Petroleum
engineering, Petroleum University of Technology, Abadan, Iran}
\address[GUT]{Department of Mathematics, Golpayegan University of
Technology, Golpayegan, P.O.Box 8771765651, Iran}
\address[SBU]{Department of Computer Sciences, Shahid Beheshti University, G.C. Tehran 19697-64166, Iran}

\begin{abstract}

This work is motivated by the frequent occurrence of boundary value
problems with various boundary conditions in the modeling of some
problems in engineering and physical science. Here we propose a new
technique to force the positive definite kernels such as some radial
basis functions to satisfy the boundary conditions exactly. It can
improve the applications of existing methods based on positive
definite kernels and radial basis functions especially the
pseudospectral radial basis function method for handling the
differential equations with more complicated boundary conditions.
The proposed method is applied to a singularly perturbed
steady-state convection-diffusion problem, two and three dimensional
Poisson's equations with various boundary conditions.

\end{abstract}

\begin{keyword}
Positive definite kernel; Radial basis function; Pseudospectral
method; Multi-Dimensional boundary value problems; Poisson's
equation; Convection-diffusion problem
\end{keyword}
\end{frontmatter}

\section{Introduction}
Setting up and solving models described by the Poisson's equation is
one of the cornerstones of electrostatics. The Poisson's equations
combined with the various type of boundary conditions are of
importance for a wide field of applications in electrostatics,
mechanical engineering, theoretical and computational physics and
theoretical chemistry. This work is motivated by the frequent
occurrence of boundary value problems with various boundary
conditions in the modeling of some problems in engineering and
physical science. In recent years, several kernel based algorithms
have been proposed for solving boundary value problems
\cite{roohani,a1,a2,parand,kansa,shivanian1,shivanian2,roohani2,p2,p3,babak,babak1_17,babak3,babak2,babak5,br2,br3,arq6,arq3,arq4,arq5,akgul_r1,akgul_r7,parand-amani,fassh1}.
Fasshauer \cite{fassh1} has shown that many of the standard
algorithms and strategies used for solving ordinary and partial
differential equations with polynomial pseudospectral methods can be
easily adapted for the use with radial basis functions.
pseudospectral radial basis function (RBF--PS) method has already
been proven successful in numerical solution of various type of
differential equation
\cite{fassh1,sarra,fassh2,uddin1,fassh3,fassh4,uddin2,uddin3,forn1}.
This paper presents a new approach to imposing various boundary
conditions on positive definite kernels such as some radial basis
functions and their application in kernel based pseudospectral
method. The various boundary conditions such as Dirichlet, Neumann,
Robin, mixed and multi--point boundary conditions, have been
considered. Here we propose a new technique to force the radial
basis functions to satisfy the boundary conditions exactly, so the
approximate solution also satisfies the boundary conditions exactly.
It can improve the applications of existing methods based on
positive definite kernels and radial basis functions especially the
pseudospectral radial basis function method for handling the
differential equations with more complicated boundary conditions.
Some new kernels are constructed using general kernels in a manner
which satisfies required conditions and we prove that if the
reference kernel is positive definite then the newly constructed
kernel is positive definite, also. Furthermore, we show that the
collocation matrix is nonsingular if some conditions are satisfied.
In \cite{fassh1} RBF--PS method has been applied successfully to
homogeneous Dirichlet boundary conditions. Here we try to handle
many other types of boundary conditions in one, two and three
dimension. The proposed technique can be applied to other kinds of
radial basis functions methods easily. Imposing boundary conditions
is a key issue in meshless methods based on radial basis functions
and can be quite challenging. We shall discuss how to deal with
boundary conditions in radial basis functions methods. The Dirichlet
boundary condition is relatively easy and other type boundary
conditions require more attentions. There are two basic approaches
to deal with boundary conditions for pseudospectral methods,
restrict the method to basis functions that satisfy the boundary
conditions or add some additional equations to enforce the boundary
conditions. An inherent advantage of the proposed technique is its
simplicity and easy programmability. Difficulties in the various
radial basis function method arise in applying the method to a
boundary value problem with more
complicated nonhomogeneous boundary conditions in each dimension such as:\\
Let $u$ be the solution which we are looking for and $\Omega=[a,b]$
is the domain of problem in one direction:

$1.$ the Dirichlet boundary condition
\begin{equation}\label{bc3}
 u(a)=A \hspace{1 cm} and \hspace{1 cm}u(b)=B,
\end{equation}

$2.$ the Neumann boundary condition
\begin{equation}\label{bc3}
 u'(a)=A \hspace{1 cm} and \hspace{1 cm}u'(b)=B,
\end{equation}

$3.$ the mixed boundary condition
\begin{equation}\label{bc3}
 u(a)=A \hspace{1 cm} and \hspace{1 cm}u'(b)=B,
\end{equation}

$4.$ the Robin boundary condition
\begin{equation}\label{bc2}
\begin{array}{ll}
\alpha_{1} u(a)+\beta_{1} u'(a)&=c_{1},\\
\alpha_{2}u(b)+\beta_{2} u'(b)&=c_{1},\\
\end{array}
\end{equation}

$5.$ the Multi--point boundary condition
\begin{equation}\label{bc1}
u(a)=\sum_{j=1}^{J}\alpha_{j}u(\xi_{j})+\psi,
\end{equation}
where
\begin{equation*}\label{bc01}
a<\xi_{1}<\xi_{2}<...<\xi_{J}<b.
\end{equation*}

In fact, the Dirichlet and Neumann boundary conditions are special
cases of the Robin boundary condition. The presented technique is
easy to utilize by existing radial basis function method for
handling more complicated boundary conditions. Several test examples
are presented to demonstrate the accuracy and versatility of the
proposed technique. We apply it to some problems in one, two and
three dimensional with the different type of boundary conditions and
compare the results with the RBF collocation method introduced in
\cite{fassh1} and the best reported results in the literature. The
reported results show that the proposed method is accurate and
significantly more efficient than RBF collocation method and some
other existing radial basis functions method.


\section{Kernel based pseudospectral method}

In this section, we give a brief review of the pseudospectral method
based on kernels. An important feature of pseudospectral methods is
the fact that one usually is content with obtaining an approximation
to the solution on a discrete set of grid points. In pseudospectral
methods we usually seek an approximate solution of differential
equation in the form
\begin{equation}\label{e:006}
u_{N}(x)=\sum_{j=1}^{N}\lambda_{j}\phi_{j}(x).
\end{equation}
For the grid points $x_{i},i=1,...,N,$ We will use the basis
functions $\phi_{j}(x)=R(x,x_{j}),$ where $R(x,y)$ is a kernel. If
we evaluate the unknown function $u(x)$ at grid points
$x_{i},i=1,...,N,$ then we have,
\begin{equation}\label{e:01}
u_{N}(x_{i})=\sum_{j=1}^{N}\lambda_{j}\phi_{j}(x_{i}),\hspace{1 cm}
i=1,...,N,
\end{equation}
or in matrix notation,
\begin{equation}\label{e:02}
\boldsymbol{u}=A\boldsymbol{\lambda},
\end{equation}
where $\boldsymbol{\lambda}=[\lambda_{1},...,\lambda_{N}]^{T}$ is
the coefficient vector, the evaluation matrix $A$ has the entries
$A_{i,j}=\phi_{j}(x_{i})=R(x_{i},x_{j})$ and
$\boldsymbol{u}=[u_{N}(x_{1},...,u_{N}(x_{N})]^{T}$. Let
$\mathcal{L}$ be a linear operator, we can use the expansion
\eqref{e:006} to compute the $\mathcal{L}u_{N}$ by operating
$\mathcal{L}$ on the basis functions,
\begin{equation}\label{e:8}
\mathcal{L}u_{N}=\sum_{j=1}^{N}\beta_{j}\mathcal{L}\phi_{j}(x),\hspace{1
cm} x\in \mathbb{R}^{d}.
\end{equation}
If we again evaluate at the grid points $x_{i},i=1,...,N,$ then we
get in matrix notation,
\begin{equation}\label{e:9}
\boldsymbol{L}\boldsymbol{u}=A_{L}\boldsymbol{\lambda},
\end{equation}
where $\boldsymbol{u}$ and $\boldsymbol{\lambda}$ are as above and
the matrix $A_{L}$ has entries $\mathcal{L}\phi_{j}(x_{i})$. Then we
can use \eqref{e:02} to solve the coefficient vector
$\boldsymbol{\lambda}=A^{-1}\boldsymbol{u},$ and then \eqref{e:9}
yields,
\begin{equation}\label{e:10}
\boldsymbol{L}\boldsymbol{u}=A_{L}A^{-1}\boldsymbol{u},
\end{equation}
so that the operational matrix $\boldsymbol{L}$ corresponding to
linear operator $L$ is given by,
\begin{equation}\label{omat}
\boldsymbol{L}=A_{L}A^{-1}.
\end{equation}
In order to obtain the differentiation matrix $\boldsymbol{L}$ we
need to ensure invertibility of the evaluation matrix $A$. This
generally depends both on the basis functions chosen as well as the
location of the grid points $x_{i},i=0,...,n$. For positive definite
kernels the invertibility of the evaluation matrix $A$ for any set
of distinct grid points is guaranteed. Suppose we have a linear
differential equation of the form
\begin{equation}\label{e0011}
\mathcal{L}u=f,
\end{equation}
by ignoring boundary conditions. An approximate solution at the grid
points can be obtained by solving the discrete linear system
\begin{equation}\label{e0012}
\textbf{L}\textbf{u}=\textbf{f},
\end{equation}
where $\textbf{u}$ and $\textbf{f}$ contain the value of $u$ and $f$
at grid points and $\textbf{L}$ is the mentioned operational matrix
corresponds to linear differential operator $\mathcal{L}$. Imposing
boundary conditions in radial basis functions methods based on
radial basis functions and can be quite challenging. Here we impose
boundary conditions on basis functions, instead of add some
additional equations in \eqref{e0012} to enforce the boundary
conditions. Many radial basis functions are defined by a constant
called the shape parameter. The choice of shape parameter have a
significant impact on the accuracy of an radial basis function
method. It is clear that selecting optimal shape parameter in the
methods based on the radial basis functions is an open problem. But
authors of \cite{fassh3} proposed an algorithm for choosing an
optimal value of the shape parameter. Here we consider the effect of
different shape parameters on the accuracy of approximations and
compare it with the RBF collocation method.


\section{Imposing the boundary conditions}

For some nonhomogeneous problems, we can construct a homogenization
function $M$, which satisfies the nonhomogeneous boundary conditions
of problem. Then the nonhomogeneous problem can be reduced to a
homogeneous problem as follows. Let
\begin{equation*}\label{bc_r1}
 \mathcal{L}u(\textbf{x})=f(\textbf{x}),\hspace{.5 cm} \textbf{x}\in \Omega\subset R^{d},\ \mathcal{B}u(\textbf{x})=g(\textbf{x}),\hspace{.5 cm} \textbf{x}\in \partial \Omega
\end{equation*}
where $\partial \Omega$ is the boundary of $\Omega$ and
$\mathcal{L}$ is a differential operator. Then the boundary
conditions can be homogenized using
\begin{equation*}\label{bc0_r2}
u(\textbf{x})=v(\textbf{x})+M(\textbf{x}).
\end{equation*}
After homogenization of the boundary conditions, the nonhomogeneous
problem can be convert in the following form
\begin{equation*}\label{bc_r1}
 \mathcal{L}v(\textbf{x})=F(\textbf{x}),\hspace{.5 cm} \textbf{x}\in \Omega\subset R^{d},\ \mathcal{B}v(\textbf{x})=0,\hspace{.5 cm} \textbf{x}\in \partial \Omega
\end{equation*}
where $F(\textbf{x})=f(\textbf{x})-\mathcal{L}M(\textbf{x})$. For
example in the following we construct the homogenization function
$M$ for two dimensional Dirichlet boundary conditions. Let we have
the following boundary conditions
\begin{equation}\label{1_h}
\begin{array}{llll}
u(a,y)&=g_{1}(y),&&u(b,y)=g_{2}(y),\\
u(x,c)&=h_{1}(x), &&u(x,d)=h_{2}(x),\\
\end{array}
\end{equation}
then the following function $M$ satisfies the nonhomogeneous
conditions and we can homogenize the boundary conditions using
$u=v+M$ which $v$ satisfies the homogenous conditions
\begin{equation}\label{2_h}
\begin{array}{ll}
M_{1}(x,y)=\frac{x-b}{a-b}g_{1}(y)+\frac{x-a}{b-a}g_{2}(y),\\
M(x,y)=M1(x,y)+\frac{y-d}{c-d}(h_{1}(x)-M_{1}(x,c))+\frac{y-c}{d-c}(h_{2}(x)-M_{1}(x,d)),\\
\end{array}
\end{equation}
we can easily see that $M$ satisfies the nonhomogeneous boundary
conditions \eqref{1_h}. For other type of boundary conditions the
homogenization function $M$ can be constructed in a similar way. In
the proposed method, firstly, the nonhomogeneous problem is reduced
to a homogeneous one and then the homogenous conditions are imposed
on kernel function.

Let
\begin{equation}\label{3_h}
L_{1}v=0,\hspace{.5 cm}L_{2}v=0,
\end{equation}
be the homogenous conditions in $x$ direction. In the next theorem
the kernel function is constructed using the reference kernel
$R(x,y)$ such that satisfies \eqref{3_h}.
\begin{equation}\label{bc02_6}
R_{1}(x,y)=R(x,y)-\frac{L_{1,x}R(x,y)L_{1,y}R(x,y)}{L_{1,x}L_{1,y}
R(x,y)},
\end{equation}
and
\begin{equation}\label{bc02_7}
R_{2}(x,y)=R_{1}(x,y)-\frac{L_{2,x}R_{1}(x,y)L_{2,y}R_{1}(x,y)}{L_{2,x}L_{2,y}
R_{1}(x,y)}.
\end{equation}
where the subscript $x$ on the operator $L$ indicates that the
operator $L$ applies to the function of $x$.

\begin{theorem}\label{te2}
If $L_{1,x}L_{1,y} R(x,y)\neq 0$ and $L_{2,x}L_{2,y} R_{1}(x,y)\neq
0$, then $R_{2}(x,y)$ given by \eqref{bc02_7} satisfies the boundary
conditions \eqref{3_h} exactly.
\end{theorem}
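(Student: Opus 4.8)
The plan is to establish that the constructed kernel is annihilated by the boundary operators in its first argument, i.e. to show $L_{1,x}R_2(x,y)=0$ and $L_{2,x}R_2(x,y)=0$ for every $y$; this is precisely the sense in which the basis functions $\phi_j(x)=R_2(x,x_j)$, and hence any linear combination forming the approximant, satisfy the homogeneous conditions \eqref{3_h}. The whole argument rests on one structural observation about the boundary functionals. Since each $L_i$ is a boundary functional acting on a single variable, $L_{1,x}$ applied to a bivariate expression removes all dependence on $x$ and returns a function of $y$ alone, and symmetrically for $L_{1,y}$. Consequently, in any product one factor of which depends only on $y$, that factor behaves as a scalar under $L_{1,x}$ and may be pulled outside the operator; moreover, operators acting on distinct variables commute, $L_{i,x}L_{j,y}=L_{j,y}L_{i,x}$.

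First I would verify the single-condition step for $R_1$. Writing $P(y)=L_{1,x}R(x,y)$, $Q(x)=L_{1,y}R(x,y)$, and the constant denominator $S=L_{1,x}L_{1,y}R(x,y)$, the defining formula \eqref{bc02_6} reads $R_1=R-P(y)Q(x)/S$. Applying $L_{1,x}$, using that $L_{1,x}Q(x)=L_{1,x}L_{1,y}R=S$ while $P(y)/S$ pulls out as a scalar, gives $L_{1,x}R_1=P(y)-P(y)S/S=0$; the symmetric computation yields $L_{1,y}R_1=0$. The hypothesis $S\neq0$ is exactly what makes this normalization well defined. Repeating the identical computation one level up, with $R_1$ in place of $R$ and $L_2$ in place of $L_1$ (and using $L_{2,x}L_{2,y}R_1\neq0$), shows that the second correction term in \eqref{bc02_7} forces $L_{2,x}R_2=0$ and $L_{2,y}R_2=0$.

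The step I expect to be the crux is confirming that passing from $R_1$ to $R_2$ does not destroy the $L_1$-condition already achieved, that is, that $L_{1,x}R_2=0$ still holds. Writing the correction term in \eqref{bc02_7} as $P_1(y)Q_1(x)/S_1$ with $P_1(y)=L_{2,x}R_1$, $Q_1(x)=L_{2,y}R_1$, and $S_1=L_{2,x}L_{2,y}R_1$, I would apply $L_{1,x}$: the first summand gives $L_{1,x}R_1=0$ by the previous step, and in the second summand $P_1(y)/S_1$ pulls out as a scalar, leaving $L_{1,x}Q_1(x)=L_{1,x}L_{2,y}R_1$. Here the commutativity of operators on distinct variables is decisive, since $L_{1,x}L_{2,y}R_1=L_{2,y}L_{1,x}R_1=L_{2,y}(0)=0$. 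Hence $L_{1,x}R_2=0$, and combined with the preceding paragraph, $R_2$ annihilates both $L_1$ and $L_2$ in its first argument, which is the assertion. The only genuine care needed is to keep careful track of which factors are $x$-only versus $y$-only, so that the scalar pull-out and the commutation are justified; the computations themselves are routine once that bookkeeping is fixed.
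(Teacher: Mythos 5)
Your proposal is correct and follows essentially the same route as the paper: show $L_{1,x}R_1=0$ by direct cancellation, observe that the cross term $L_{1,x}L_{2,y}R_1$ vanishes so the second correction does not destroy the $L_1$-condition, and conclude $L_{1,x}R_2=0$ and $L_{2,x}R_2=0$. The only cosmetic difference is that you obtain $L_{1,x}L_{2,y}R_1=0$ by commuting the functionals acting on distinct variables and applying $L_{2,y}$ to the already-established zero $L_{1,x}R_1$, whereas the paper re-expands the definition of $R_1$ directly --- the same commutativity underlies both computations.
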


\begin{proof}
By applying the operator $L_{1,x}$ to $R_{1}(x,y)$ we have
\begin{equation*}\label{bc02_8}
\begin{array}{ll}
L_{1,x}R_{1}(x,y)&=L_{1,x}R(x,y)-\frac{L_{1,x}R(x,y)L_{1,x}L_{1,y}R(x,y)}{L_{1,x}L_{1,y}
R(x,y)}\\&=L_{1,x}R(x,y)-L_{1,x}R(x,y)=0,\\
\end{array}
\end{equation*}
and
\begin{equation*}\label{bc02_9}
\begin{array}{ll}
L_{1,x}L_{2,y}R_{1}(x,y)&=L_{1,x}L_{2,y}R(x,y)-\frac{L_{2,y}L_{1,x}R(x,y)L_{1,x}L_{1,y}R(x,y)}{L_{1,x}L_{1,y}
R(x,y)}\\
&=L_{1,x}L_{2,y}R(x,y)-L_{2,y}L_{1,x}R(x,y)=0,\\
\end{array}
\end{equation*}
then
\begin{equation*}\label{bc02_10}
L_{1,x}R_{2}(x,y)=L_{1,x}R_{1}(x,y)-\frac{L_{2,x}R_{1}(x,y)L_{1,x}L_{2,y}R_{1}(x,y)}{L_{2,x}L_{2,y}.
R_{1}(x,y)}=0,
\end{equation*}
By applying the operator $L_{2,x}$ to $R_{2}(x,y)$ we have
\begin{equation*}\label{bc02_9}
\begin{array}{ll}
L_{2,x}R_{2}(x,y)&=L_{2,x}R_{1}(x,y)-\frac{L_{2,x}R_{1}(x,y)L_{2,x}L_{2,y}R_{1}(x,y)}{L_{2,x}L_{2,y}
R_{1}(x,y)}\\&=L_{2,x}R_{1}(x,y)-L_{2,x}R_{1}(x,y)=0.\\
\end{array}
\end{equation*}
\end{proof}
\begin{theorem}\label{the1}
Let real valued symmetric positive definite kernel $R(x,y)$ be the
reproducing kernel of Hilbert space $H$ defined on a region $\Omega$
and $L:H\rightarrow R$ be a continuous linear functional and
$L_{x}L_{y}R(x,y)\neq 0$. Then
$R_{0}(x,y)=R(x,y)-\frac{L_{x}R(x,y)L_{y}R(x,y)}{L_{x}L_{y}
R(x,y)},$ is also a real valued positive definite kernel.
\end{theorem}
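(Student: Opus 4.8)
The plan is to recognize $R_{0}$ as the reproducing kernel of the null space of $L$, and to establish positive definiteness directly from the reproducing property together with the Cauchy--Schwarz inequality. The first step is to invoke the Riesz representation theorem: since $L:H\to\mathbb{R}$ is continuous and linear, there is a unique $\ell\in H$ with $L(f)=\langle f,\ell\rangle_{H}$ for every $f\in H$. I would then translate the three formal expressions appearing in the definition of $R_{0}$ into inner--product language. Writing $R_{x}:=R(\cdot,x)$, the reproducing property gives $f(x)=\langle f,R_{x}\rangle_{H}$, so that $L_{x}R(x,y)=L(R_{y})=\langle R_{y},\ell\rangle_{H}=\ell(y)$, and symmetrically $L_{y}R(x,y)=\ell(x)$, while $L_{x}L_{y}R(x,y)=L(\ell)=\langle\ell,\ell\rangle_{H}=\|\ell\|_{H}^{2}$. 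The hypothesis $L_{x}L_{y}R(x,y)\neq0$ is then exactly the statement $\ell\neq0$, which guarantees a strictly positive denominator; consequently
\begin{equation*}
R_{0}(x,y)=R(x,y)-\frac{\ell(x)\,\ell(y)}{\|\ell\|_{H}^{2}}.
\end{equation*}

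The second step is to verify the defining inequality of a positive definite kernel. Fix finitely many distinct points $x_{1},\dots,x_{n}\in\Omega$ and real scalars $c_{1},\dots,c_{n}$, and set $f=\sum_{i=1}^{n}c_{i}R_{x_{i}}\in H$. Expanding and using symmetry of $R$, the reference--kernel quadratic form equals $\sum_{i,j}c_{i}c_{j}R(x_{i},x_{j})=\|f\|_{H}^{2}$, while the reproducing property yields $\sum_{i=1}^{n}c_{i}\ell(x_{i})=\bigl\langle\ell,\sum_{i}c_{i}R_{x_{i}}\bigr\rangle_{H}=\langle\ell,f\rangle_{H}$. Substituting these identities gives
\begin{equation*}
\sum_{i,j=1}^{n}c_{i}c_{j}\,R_{0}(x_{i},x_{j})=\|f\|_{H}^{2}-\frac{\langle\ell,f\rangle_{H}^{2}}{\|\ell\|_{H}^{2}},
\end{equation*}
and the Cauchy--Schwarz inequality $\langle\ell,f\rangle_{H}^{2}\le\|\ell\|_{H}^{2}\,\|f\|_{H}^{2}$ makes the right--hand side nonnegative, which is the required conclusion (symmetry and real--valuedness of $R_{0}$ being inherited immediately from those of $R$ and $\ell$).

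The main obstacle is conceptual rather than computational: it lies in the careful bookkeeping of the first step, namely correctly reading the subscripted operators $L_{x},L_{y}$ as the functional $L$ acting in the indicated variable and then identifying each resulting quantity with $\ell$ via Riesz representation and the reproducing property. Once these identifications are in place, the positivity is immediate from Cauchy--Schwarz. As a sanity check on the identity, the subtracted term $\ell(x)\ell(y)/\|\ell\|_{H}^{2}$ is precisely the reproducing kernel of the one--dimensional subspace $\mathrm{span}\{\ell\}$, so $R_{0}$ is the reproducing kernel of its orthogonal complement $\{f\in H:L(f)=0\}$; this geometric picture explains why $R_{0}$ must again be a real, symmetric, positive definite reproducing kernel, and why a zero of the quadratic form—equality in Cauchy--Schwarz—occurs exactly when $\sum_{i}c_{i}R_{x_{i}}$ is proportional to $\ell$.
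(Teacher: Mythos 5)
Your proof is correct, but it takes a genuinely different route from the paper's. The paper argues structurally: it sets $H_{0}=\{u\in H: Lu=0\}$, observes that this is a closed subspace and hence itself a reproducing kernel Hilbert space, and then verifies --- using the same Riesz representer (called $g$ there, your $\ell$) --- that $R_{0}(\cdot,\bar{x})\in H_{0}$ for every $\bar{x}$ and that $(R_{0}(x,\cdot),u)_{H}=u(x)$ for all $u\in H_{0}$; positive definiteness then follows from the general fact that a reproducing kernel is a (semi-)positive definite kernel. You instead verify the defining inequality directly: after the same Riesz-based identifications $L_{x}R(x,y)=\ell(y)$, $L_{y}R(x,y)=\ell(x)$, $L_{x}L_{y}R(x,y)=\|\ell\|_{H}^{2}$, you write the quadratic form as $\|f\|_{H}^{2}-\langle\ell,f\rangle_{H}^{2}/\|\ell\|_{H}^{2}$ with $f=\sum_{i}c_{i}R_{x_{i}}$ and conclude by Cauchy--Schwarz. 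What each approach buys: the paper's argument delivers the stronger structural fact that $R_{0}$ is the reproducing kernel of the constrained space $H_{0}$, which is exactly what its subsequent remark about inheriting well-posedness and stability estimates from \cite{wend,aron} relies on; your argument is more elementary and self-contained, makes explicit that the hypothesis $L_{x}L_{y}R(x,y)\neq 0$ forces the denominator to be strictly positive (a point the paper leaves implicit), and pins down the equality case --- the form vanishes precisely when $f$ is proportional to $\ell$ --- which correctly signals that $R_{0}$ is in general only positive semi-definite, the same conclusion (in Aronszajn's usage of ``positive definite'') that the paper's reproducing-kernel argument yields. Your closing observation, that $\ell(x)\ell(y)/\|\ell\|_{H}^{2}$ is the reproducing kernel of $\mathrm{span}\{\ell\}$ and $R_{0}$ that of its orthogonal complement $\{f\in H:L(f)=0\}$, is in effect the paper's entire proof in miniature.
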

\begin{proof}
Let $H_{0}=\{u\in H: Lu=0\}$. Then $H_{0}$ is a closed subspace of
$H$ and it is a reproducing kernel Hilbert space. Now we prove that
$R_{0}(x,y)$ is the symmetric reproducing kernel of $H_{0}$ so it is
a symmetric positive definite kernel. based on the Riesz'
representation theorem there exists $g\in H$ such that for all $u\in
H$ we have $Lu=(g,u)_{H}$. Then
\begin{equation*}\label{eq00}
L_{y}R(x,y)=(g,R(x,.))_{H}=g(x)\in H,
\end{equation*}
where the lower index shows the variable that the functional acts
on. So for any $\bar{x}\in \Omega$ we have
\begin{equation}\label{eq01}
R_{0}(x,\bar{x})=R(x,\bar{x})-\frac{L_{x}R(x,\bar{x})L_{y}R(x,y)}{L_{x}L_{y}R(x,y)}=R(x,\bar{x})-\alpha
g(x)\in H,
\end{equation}
for some $\alpha\in R$. Also we have
\begin{equation}\label{eq02}
L_{x}R_{0}(x,y)=L_{x}R(x,y)-\frac{L_{x}R(x,y)L_{x}L_{y}R(x,y)}{L_{x}L_{y}R(x,y)}=0.
\end{equation}
From \eqref{eq01} and \eqref{eq02} we can see that for any
$\bar{x}\in \Omega$, $R_{0}(x,\bar{x})\in H_{0}$. For any $u\in
H_{0}$ we have
\begin{equation}\label{eq03}
\begin{array}{ll}
(R_{0}(x,.),u)_{H}&=(R(x,.),u)_{H}-\frac{L_{x}(R(x,.),u)_{H}L_{y}R(x,y)}{L_{x}L_{y}R(x,y)}\\
&=u(x)-\frac{L_{x}u(x)L_{y}R(x,y)}{L_{x}L_{y}R(x,y)}=u(x),\\
\end{array}
\end{equation}
which shows the reproducing property of $R_{0}(x,y)$ in $H_{0}$. It
is easy to see that $R_{0}(x,y)$ is symmetric reproducing kernel of
$H_{0}$ so it is a symmetric positive definite kernel.
\end{proof}

A real valued positive definite kernel $R(x,y)$ leads to a real
Hilbert space of real valued functions named native space
\cite{wend}. Based on previous theorem the well posedness, stability
estimates and other features of symmetric positive definite kernel
based methods, proved in \cite{wend,aron}, for new constructed
kernels are all Still hold.

\begin{theorem}\label{the1}
If $R(x,y)$ be the reproducing kernel of reproducing kernel Hilbert
space $H$ defined on a region $\Omega\subset R^{d}$ and
$\mathfrak{L}:H\rightarrow R$ be such that $\mathfrak{L}u=0$ for
$u\in H$ result that $u=0$. Then the operator matrix $A_{L}$ is
nonsingular.
\end{theorem}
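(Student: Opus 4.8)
The plan is to argue by contradiction, exploiting the injectivity hypothesis on $\mathfrak{L}$ together with the fact that a positive definite kernel produces linearly independent translates. Suppose $A_L$ were singular. Then it has a nontrivial null space, so there is a nonzero coefficient vector $\boldsymbol{\lambda}=[\lambda_1,\dots,\lambda_N]^T$ with $A_L\boldsymbol{\lambda}=0$. I would then form the associated function $u_N(x)=\sum_{j=1}^N \lambda_j \phi_j(x)=\sum_{j=1}^N \lambda_j R(x,x_j)$, which lies in $H$ because each kernel translate $R(\cdot,x_j)$ is an element of $H$.

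First I would record that $u_N\neq 0$. Since $R$ is positive definite, the evaluation matrix $A$ with entries $R(x_i,x_j)$ is nonsingular for distinct nodes, as noted earlier; hence if $\sum_j c_j R(\cdot,x_j)=0$ then evaluating at each $x_i$ gives $A\mathbf{c}=0$ and thus $\mathbf{c}=0$, so the translates $\{R(\cdot,x_j)\}_{j=1}^N$ are linearly independent and the nonzero vector $\boldsymbol{\lambda}$ yields $u_N\neq 0$.

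Next I would translate the null condition into a statement about $\mathfrak{L}u_N$. By linearity and the definition of $A_L$, the $i$-th entry of $A_L\boldsymbol{\lambda}$ equals $\sum_j \lambda_j \mathfrak{L}\phi_j(x_i)=\mathfrak{L}u_N(x_i)$, so $A_L\boldsymbol{\lambda}=0$ says precisely that $\mathfrak{L}u_N$ vanishes at every collocation node $x_i$. The goal is then to upgrade this to $\mathfrak{L}u_N\equiv 0$, after which the hypothesis $\mathfrak{L}u=0\Rightarrow u=0$ forces $u_N=0$, contradicting $u_N\neq 0$ and proving that $A_L$ is nonsingular.

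The main obstacle is exactly this upgrade, from vanishing at the $N$ nodes to vanishing identically: injectivity of $\mathfrak{L}$ on $H$ does not by itself control the off-grid behaviour of $\mathfrak{L}u_N$, since $N$ linearly independent functions need not be unisolvent on $N$ prescribed points. The tool I would reach for is the reproducing-kernel structure: writing $\mathfrak{L}u_N(x_i)=\langle u_N, w_i\rangle_H$, where $w_i$ is the Riesz representer of the continuous functional $u\mapsto \mathfrak{L}u(x_i)$, recasts $A_L\boldsymbol{\lambda}=0$ as the orthogonality relation $u_N\perp\operatorname{span}\{w_1,\dots,w_N\}$ in $H$, and combining this with the independence of the translates and the injectivity of $\mathfrak{L}$ is where the real argument lies. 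I expect this step to require the most care, and likely an extra structural assumption (for instance that $\mathfrak{L}$ maps the interpolation space into itself, so that grid-point vanishing of $\mathfrak{L}u_N$ already implies its identical vanishing).
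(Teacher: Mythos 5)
Your setup is essentially the paper's argument run in contrapositive form: the paper takes an arbitrary vector $c$, supposes $c_1\mathfrak{L}\phi_1(x)+\dots+c_n\mathfrak{L}\phi_n(x)=0$, pulls $\mathfrak{L}$ out by linearity, applies the injectivity hypothesis to get $\sum_j c_j R(x,x_j)=0$, and invokes positive definiteness to force $c=0$ --- exactly the chain you run through $u_N=\sum_j\lambda_j R(\cdot,x_j)$. But note carefully what the paper's chain assumes: that the function $\sum_j c_j\mathfrak{L}\phi_j$ vanishes \emph{identically} in $x$. Singularity of $A_L$ only supplies $\sum_j c_j\mathfrak{L}\phi_j(x_i)=0$ at the $n$ nodes, which is weaker. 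So the paper's proof establishes only that the functions $\mathfrak{L}\phi_1,\dots,\mathfrak{L}\phi_n$ are linearly independent, and its closing inference, ``linearly independent, so $A_L$ is nonsingular,'' is a non sequitur: $n$ linearly independent functions need not be unisolvent on $n$ prescribed nodes, since a nontrivial combination may vanish on the node set without vanishing identically. That is precisely the obstruction you isolate in your final paragraph.

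In other words, your refusal to bridge the step from nodal vanishing of $\mathfrak{L}u_N$ to $\mathfrak{L}u_N\equiv 0$ is not a deficiency of your attempt relative to the paper --- it is the genuine gap in the paper's own proof, which you have located exactly. As you suspect, an additional structural hypothesis is needed to close it: for instance, unisolvency of $\{\mathfrak{L}\phi_j\}$ on the chosen nodes, or passage to the symmetric (Hermite-type) matrix with entries $\mathfrak{L}_x\mathfrak{L}_y R(x_i,x_j)$, whose invertibility can be derived from positive definiteness. For the non-symmetric Kansa-type matrix $A_L$ with entries $\mathfrak{L}\phi_j(x_i)$ actually considered here, nonsingularity for arbitrary node configurations is known to fail in general (the Hon--Schaback counterexamples), so the theorem as stated cannot be rescued without strengthening its hypotheses. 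Your Riesz-representer reformulation, $A_L\boldsymbol{\lambda}=0$ exactly when $u_N\perp\operatorname{span}\{w_1,\dots,w_N\}$ in $H$, is correct and is the right language for seeing why injectivity of $\mathfrak{L}$ alone cannot control this orthogonality; the paper offers nothing that fills this hole, so on this point your blind assessment is sharper than the published argument.
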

\begin{proof}
Let the matrix $A_{L}$ has entries
$\mathfrak{L}\phi_{j}(x_{i})=\mathfrak{L}R(x_{i},x_{j})$ and $c\in
R^{n}$ be an arbitrary vector then
\begin{equation*}\label{eq_t_0}
\begin{array}{ll}
c_{1}\mathfrak{L}\phi_{1}(x)+c_{2}\mathfrak{L}\phi_{2}(x)+...+c_{n}\mathfrak{L}\phi_{n}(x)=\\
c_{1}\mathfrak{L}R(x,x_{1})+c_{2}\mathfrak{L}R(x,x_{2})+...+c_{n}\mathfrak{L}R(x,x_{n})=0,
\end{array}
\end{equation*}
then
\begin{equation*}\label{eq_t_1}
\mathfrak{L}(c_{1}R(x,x_{1})+c_{2}R(x,x_{2})+...+c_{n}R(x,x_{n}))=0,
\end{equation*}
then we have
\begin{equation*}\label{eq_t_2}
c_{1}R(x,x_{1})+c_{2}R(x,x_{2})+...+c_{n}R(x,x_{n})=0,
\end{equation*}
from positive definiteness of kernel $R(x,y)$ it is easy to see that
$c=0$ and $\phi_{i}(x),i=1,...,n$ are linearly independent and so
$A_{L}$ is nonsingular.
\end{proof}

For solving multi-dimensional problems we are using the product of
positive definite kernels as kernels in multi-dimensional domain and
it is the reproducing kernel of a reproducing kernel Hilbert space
and is strictly positive definite kernel.
\begin{theorem} \label{th002} \cite{aron}
Let $H_{1}$ and $H_{2}$ be reproducing kernel spaces with
reproducing kernels $R_{1}$ and $R_{2}$. The direct product
$\overline{H}=H_{1}\bigotimes H_{2}$ is a reproducing kernel Hilbert
space and possesses the reproducing kernel
$\overline{R}(x_{1},x_{2},y_{1},y_{2})=R_{1}(x_{1},y_{1})R_{2}(x_{2},y_{2})$.
\end{theorem}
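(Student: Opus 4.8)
The plan is to realize the abstract Hilbert space tensor product $\overline{H}=H_{1}\bigotimes H_{2}$ concretely as a space of functions on the product domain $\Omega_{1}\times\Omega_{2}$, and then to verify the reproducing property directly on the dense set of simple tensors. First I would recall the construction of the tensor product: on the algebraic span of simple tensors $f\otimes g$ (with $f\in H_{1}$ and $g\in H_{2}$) one defines the inner product on generators by $(f_{1}\otimes g_{1},f_{2}\otimes g_{2})_{\overline{H}}=(f_{1},f_{2})_{H_{1}}(g_{1},g_{2})_{H_{2}}$, extends it bilinearly, and completes. I identify each simple tensor $f\otimes g$ with the function $(x_{1},x_{2})\mapsto f(x_{1})g(x_{2})$ on $\Omega_{1}\times\Omega_{2}$, so that for fixed $(y_{1},y_{2})$ the candidate kernel $\overline{R}(\cdot,\cdot,y_{1},y_{2})$ is exactly the image of $R_{1}(\cdot,y_{1})\otimes R_{2}(\cdot,y_{2})$.

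Next I would check the two defining properties of a reproducing kernel. For membership, since $R_{i}(\cdot,y_{i})\in H_{i}$ by the reproducing property of $R_{i}$, the simple tensor $R_{1}(\cdot,y_{1})\otimes R_{2}(\cdot,y_{2})$ lies in $\overline{H}$, and under the identification above it equals $\overline{R}(\cdot,\cdot,y_{1},y_{2})$. For reproduction, the computation on a simple tensor $F=f\otimes g$,
\[
(f\otimes g,\ R_{1}(\cdot,y_{1})\otimes R_{2}(\cdot,y_{2}))_{\overline{H}}=(f,R_{1}(\cdot,y_{1}))_{H_{1}}(g,R_{2}(\cdot,y_{2}))_{H_{2}}=f(y_{1})g(y_{2})=F(y_{1},y_{2}),
\]
uses the reproducing property in each factor. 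By bilinearity this extends to all finite linear combinations of simple tensors, which are dense in $\overline{H}$.

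Finally I would pass from this dense subspace to all of $\overline{H}$ by continuity. The key estimate is that point evaluation is bounded: for $F$ in the dense subspace, Cauchy--Schwarz together with the identity above gives
\[
|F(y_{1},y_{2})|=\bigl|(F,\overline{R}(\cdot,\cdot,y_{1},y_{2}))_{\overline{H}}\bigr|\le \|F\|_{\overline{H}}\,\sqrt{R_{1}(y_{1},y_{1})R_{2}(y_{2},y_{2})}.
\]
Hence the evaluation functional at $(y_{1},y_{2})$ is continuous, so any Cauchy sequence in $\overline{H}$ converges pointwise and its limit is a well-defined function on $\Omega_{1}\times\Omega_{2}$; this is what lets me regard every element of the completion as a genuine function and extend the reproducing identity to the limit. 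I expect this last point --- verifying that the abstract completion really is a function space on which point evaluation makes sense, rather than merely a space of equivalence classes of Cauchy sequences --- to be the main obstacle, and it is resolved precisely by the boundedness of point evaluation inherited from the reproducing kernels $R_{1}$ and $R_{2}$.
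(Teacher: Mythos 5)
The paper itself offers no proof of this theorem: it is quoted verbatim from Aronszajn \cite{aron}, so the only benchmark is the classical argument, and your proposal is in substance exactly that argument --- realize $\overline{H}=H_{1}\bigotimes H_{2}$ concretely on $\Omega_{1}\times\Omega_{2}$, verify membership and reproduction on simple tensors, then extend by boundedness of point evaluation --- and it is correct, with the crux (that the abstract completion is genuinely a function space) rightly identified. One detail deserves to be made explicit in two places, though it is a one-line supplement rather than a failure. On the algebraic tensor product, the identification of $\sum_{i}f_{i}\otimes g_{i}$ with a function must be both well defined (independent of the representation) and injective: your identity $F(y_{1},y_{2})=(F,\overline{R}(\cdot,\cdot,y_{1},y_{2}))_{\overline{H}}$ settles well-definedness, since the right-hand side depends only on the abstract element, but injectivity additionally needs the observation that the kernel sections $R_{1}(\cdot,y_{1})\otimes R_{2}(\cdot,y_{2})$ are \emph{total} in $\overline{H}$; this holds because the spans of $\{R_{1}(\cdot,y)\}$ and $\{R_{2}(\cdot,y)\}$ are dense in $H_{1}$ and $H_{2}$ respectively, and simple tensors of dense-span families have dense span. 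The same totality remark is what completes your final step: bounded point evaluation shows a norm-Cauchy sequence has a well-defined pointwise limit function, but to conclude that the embedding of the completion into functions is injective (an abstract limit whose pointwise limit vanishes is the zero element) you must again invoke that the kernel sections separate elements of $\overline{H}$. For contrast, a shorter standard route avoids the completion bookkeeping entirely: the pointwise product of positive definite kernels is positive definite by the Schur product theorem, so $\overline{R}$ generates a reproducing kernel Hilbert space by the Moore--Aronszajn theorem, and one then checks that the map sending $f\otimes g$ to the function $(x_{1},x_{2})\mapsto f(x_{1})g(x_{2})$ is an isometric isomorphism of $H_{1}\bigotimes H_{2}$ onto that space; your construction buys a more self-contained and explicit description of $\overline{H}$ at the price of the density and totality verifications above.
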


\begin{remark}
For multidimensional problems we can use any radial or other
positive definite kernel for each direction as reference kernel.
\end{remark}


\section{Numerical examples}
In this section, some numerical examples are considered, to
illustrate the performance and computation efficiency of new
technique. We consider a one dimensional singularly perturbed
steady-state convection dominated convection–-diffusion problem with
Robin boundary conditions as first example. The proposed method is
used to approximate the solutions of the two and three dimensional
Poisson's equations with various boundary conditions, which are of
importance for a wide field of applications in computational physics
and theoretical chemistry. The numerical results are compared with
the RBF collocation method introduced in \cite{fassh1} and the best
results reported in the literature
\cite{ansari,yun,fassh_mixed,schaback2,volkov,tsai}. For all
examples we use the Gaussian radial basis function.

\begin{example}\label{ex1}
Consider the following singularly perturbed convection diffusion
problem \cite{ansari},
\begin{equation*}\label{ex1:01}
\epsilon u''+\frac{1}{1+x}u'=x+1,
\end{equation*}
with the Robin boundary conditions
\begin{equation*}\label{ex1:02}
u(0)-\epsilon u'(0)=1,\hspace{.5 cm} u(1)+u'(1)=1.
\end{equation*}
The exact solution of problem is given by
\begin{equation*}\label{ex1:03}
u=\frac{(x+1)^{3}}{3(2\epsilon+1)}+D\left(\frac{(x+1)^{1-\frac{1}{\epsilon}}}{\epsilon-1}-(\frac{2^{^{1-\frac{1}{\epsilon}}}}{\epsilon-1}+\frac{2^{-\frac{1}{\epsilon}}}{\epsilon})\right)+(1+\frac{20}{3(2\epsilon+1)}),
\end{equation*}
where
\begin{equation*}\label{ex1:04}
D=\frac{(19+3\epsilon)/(3(2\epsilon+1))}{((1-2^{1-\frac{1}{\epsilon}})/(\epsilon-1)-2^{-1/\epsilon}/\epsilon)-1}.
\end{equation*}
\begin{table}
\centering \scriptsize{\begin{tabular}{ |c|c|c|c|c| }
  \hline
 N&$\epsilon$& \cite{ansari}  & RBF collocation & Presented method \\
 \hline
 32&$2^{-1}$&7.93e-2 &2.151530648e-17& 1.677759019e-18\\
 \hline
 64&$2^{-1}$&4.02e-2 &2.896067662e-36& 2.217079325e-37\\
 \hline
  128&$2^{-1}$&2.02e-2 &2.141728769e-74& 1.623426611e-75\\
 \hline
  32&$2^{-5}$&6.62e-1 &2.909789773e-4& 1.580306190e-6\\
 \hline
  64&$2^{-5}$&4.04e-1 &2.179862305e-16& 1.182127709e-18\\
 \hline
  128&$2^{-5}$&2.38e-1 &8.050946529e-47& 3.898941782e-49\\
 \hline
  128&$2^{-10}$&2.68e-1 &6.224300576& 3.310984775e-1\\
 \hline
  256&$2^{-10}$&1.54e-1 &1.657779099& 9.155282792e-4\\
 \hline
\end{tabular}}
\caption{\scriptsize{Maximum absolute errors, comparison of results
for Example \ref{ex1}.}}\label{tab_ex1_1}
\end{table}
\begin{figure}
\centering
\includegraphics[scale=.8]{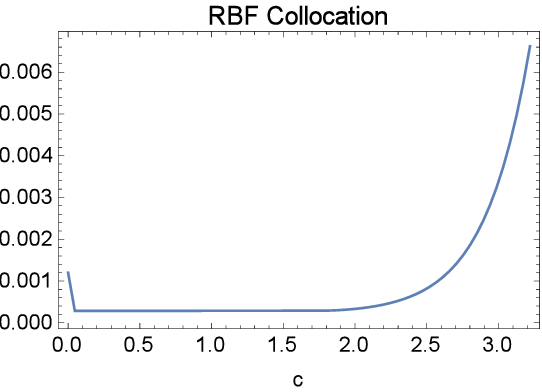}
\includegraphics[scale=.8]{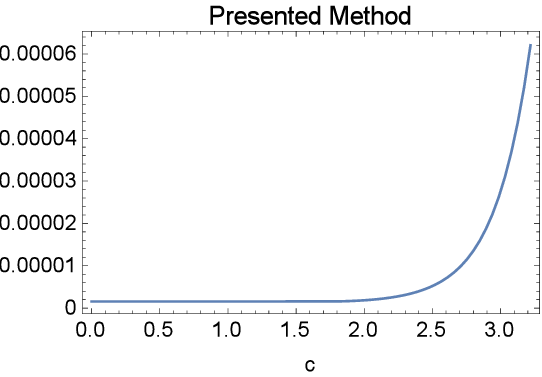}
\caption{\scriptsize{ Graphs of maximum absolute error versus shape
parameter with Gaussian RBF, $N=32$ and $\epsilon=\frac{1}{2^{5}}$,
for Example \ref{ex1}.}}\label{fig_ex1_1}
\end{figure}
For this example, the maximum absolute errors are presented in
Table~\ref{tab_ex1_1} for various values of $N$ and $\epsilon$ and
they are compared with the best reported results in \cite{ansari}
and RBF collocation method. The Gaussian RBF with $c=\frac{18}{100}$
is used for presented method and RBF collocation method. Graphs of
maximum absolute error versus shape parameter with Gaussian RBF,
$N=32$ and $\epsilon=\frac{1}{2^{5}}$ are given in
Figure~\ref{fig_ex1_1}. The reported results show that more accurate
approximate solutions can be obtained using more mesh points. The
numerical simulations show that the presented method is robust and
accurate and remains stable as shape parameter gets smaller in
contrast with the existing radial basis functions methods.
\end{example}
\begin{example}\label{ex2}
Consider the Poisson's equation,
\begin{equation*}\label{ex2:01}
-\Delta u=-\left(\frac{\partial^{2}u}{\partial
x^{2}}+\frac{\partial^{2}u}{\partial y^{2}}\right)=f(x,y),\hspace{.5
cm} (x,y)\in[0,1]\times[0,1]
\end{equation*}
with the Robin boundary conditions
\begin{equation*}\label{ex2:02}
\begin{array}{ll}
u|_{x=0}=0,\hspace{.5 cm}u|_{y=0}=0,\\
u_{v}|{x=1}=g_{N},\\
(u_{v}+\alpha u)|_{y=1}=g_{R},\\
\end{array}
\end{equation*}
where $\alpha>0$,(e.g.,$\alpha=2$) and $v$ is the outward normal
vector to the boundary. The functions $f, g_{N}$ and $g_{R}$ are
given such that the exact solution is \cite{yun},
\begin{equation*}\label{ex2:03}
u=\sin(\frac{\pi x}{6})\sin(\frac{7\pi x}{4})\sin(\frac{3\pi
y}{4})\sin(\frac{5\pi y}{4}).
\end{equation*}

\begin{table}
\centering \scriptsize{\begin{tabular}{ |c|c|c|c|c| }
  \hline
 N&$7\times 7$& $9\times 9$  & $11\times 11$ & $13\times 13$ \\
 \hline
 \cite{yun}&9.30e-3&5.92e-5 &4.32e-6& 1.10e-6\\
 \hline
 RBF collocation&3.31818e-3&3.03747e-4 &6.31077e-6& 1.06431e-7\\
 \hline
 Presented method&2.64223e-4&1.42617e-5 &2.11003e-7& 1.0773e-8\\
 \hline
\end{tabular}}
\caption{\scriptsize{Maximum absolute errors, comparison of results
for Example \ref{ex2}.}}\label{tab_ex2_1}
\end{table}
\begin{figure}
\centering
\includegraphics[scale=.6]{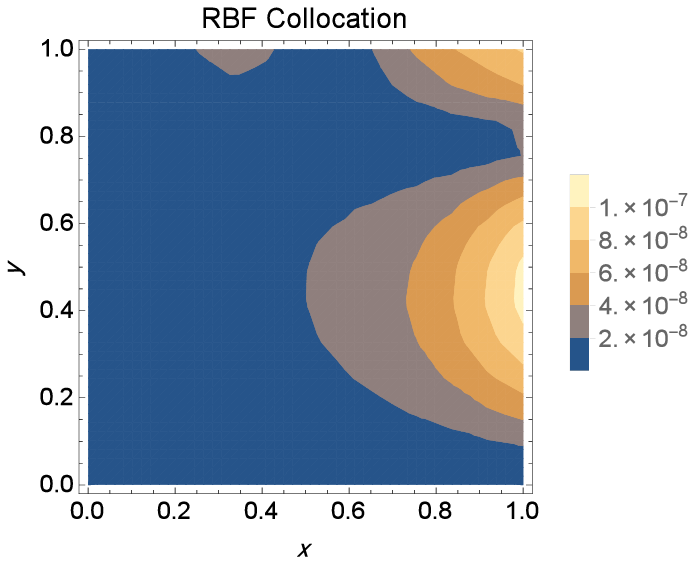}
\includegraphics[scale=.6]{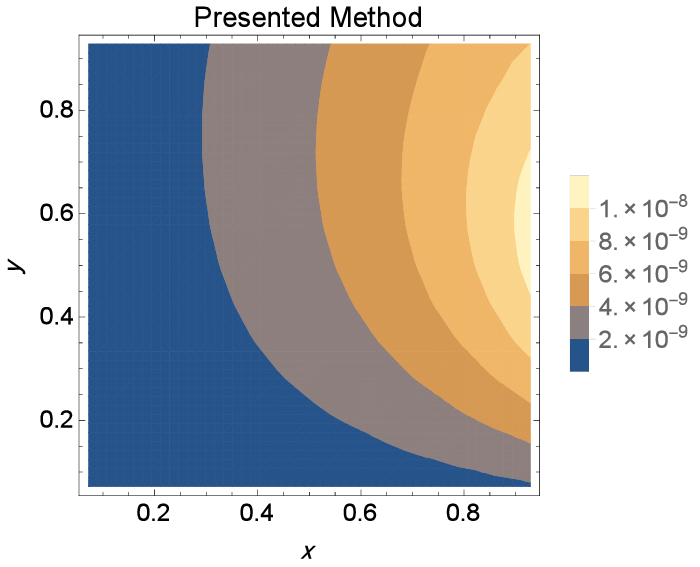}
\caption{\scriptsize{Distribution of the absolute error with
Gaussian RBF, $N=13\times13$ and $c=1$, for Example
\ref{ex2}.}}\label{fig_ex2_1}
\end{figure}
\begin{figure}
\centering
\includegraphics[scale=.8]{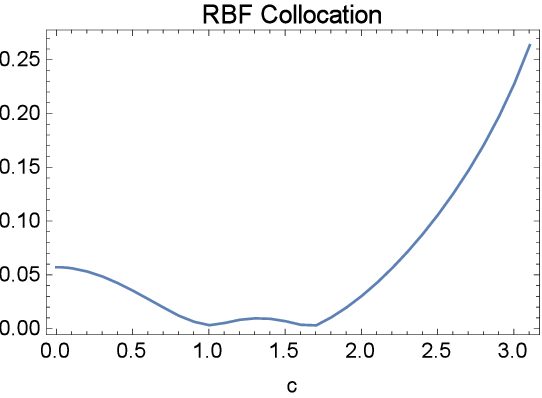}
\includegraphics[scale=.8]{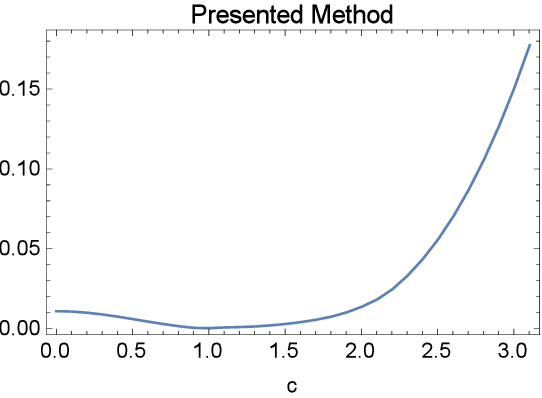}
\caption{\scriptsize{ Graphs of maximum absolute error versus shape
parameter with Gaussian RBF and $N=7\times7$, for Example
\ref{ex2}.}}\label{fig_ex2_2}
\end{figure}
The maximum absolute errors are presented in Table~\ref{tab_ex2_1}
for various values of $N$ and they are compared with the reported
results in \cite{yun} and RBF collocation method. The Gaussian RBF
with $c=1$ is used for presented method and RBF collocation method.
Figure~\ref{fig_ex2_1} shows the distribution of the absolute error
of presented method and RBF collocation method with Gaussian RBF,
$N=13\times13$ and $c=1$. The reported results show that more
accurate approximate solutions can be obtained using more mesh
points. Comparison of numerical results show that the presented
method has the exponential convergence rates and is more accurate
than RBF collocation method and combination of RBF collocation and
Ritz--Galerkin method \cite{yun}. Graphs of maximum absolute error
versus shape parameter with Gaussian RBF, $N=7\times7$ are given in
Figure~\ref{fig_ex2_2} which show that remain stable as shape
parameter gets smaller in contrast with the existing radial basis
functions methods.
\end{example}
\begin{example}\label{ex3}
Consider the Poisson's equation \cite{fassh_mixed},
\begin{equation*}\label{ex3:01}
\Delta u=y(1-y)\sin^{3}x,\hspace{.5 cm} (x,y)\in[0,\pi]\times[0,1]
\end{equation*}
with the homogeneous Dirichlet boundary conditions
\begin{equation*}\label{ex3:02}
u(x,0)=u(x,1)=u(0,y)=u(\pi,y)=0.
\end{equation*}
The exact solution is given by
\begin{equation*}\label{ex3:03}
\begin{array}{ll}
u(x,y)&=\frac{3 e^{-y}\left(-2 e -2
e^{2y}+e^{y}(1+e)(2+(-1+y)y)\right)\sin(x)}{4(1+e)}\\
&+\frac{3 e^{-3y}\left(2 e^{3} +2
e^{6y}-e^{3y}(1+e^{3})(2+9(-1+y)y)\right)\sin(3x)}{324(1+e^{3})}.\\
\end{array}
\end{equation*}
\begin{table}
\centering \scriptsize{\begin{tabular}{ |c|c|c|c|c|c| }
  \hline
 N&$\rho_{K}$\cite{fassh_mixed}& $\rho_{H}$\cite{fassh_mixed} & $\rho_{RBFC}$ & $\rho_{1}$&$\rho_{2}$\\
 \hline
 $8\times4$&1.103747e-2&1.062891e-2 &7.4357e-2& 2.1191e-3&2.84849e-3\\
 \hline
 $10\times6$&2.739293e-3&3.451799e-3 &1.58122e-3& 3.60844e-5&3.10566e-4\\
 \hline
$16\times8$&2.707006e-4&2.082886e-4 &1.92361e-5& 5.17671e-7&1.59593e-7\\
 \hline
 $20\times12$&3.894511e-5&1.273363e-5 &3.60382e-9& 8.72329e-11&6.0899e-11\\
 \hline
\end{tabular}}
\caption{\scriptsize{Relative errors, comparison of results for
Example \ref{ex3}.}}\label{tab_ex3_1}
\end{table}
\begin{figure}
\centering
\includegraphics[scale=.6]{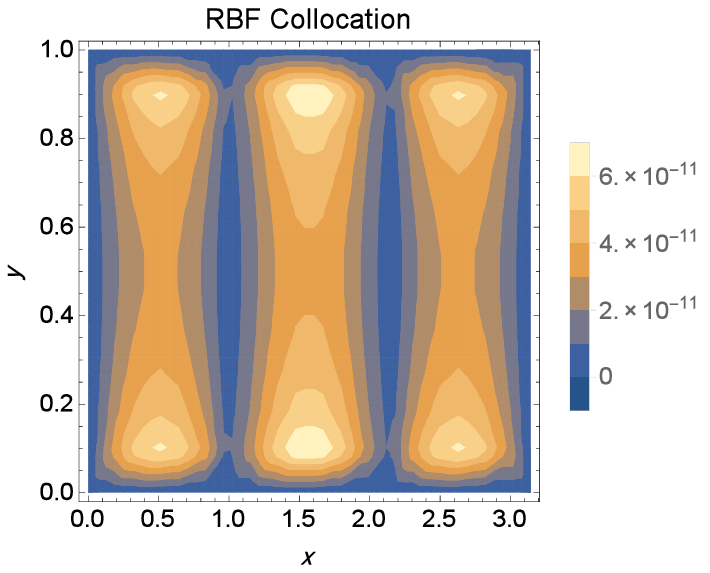}
\includegraphics[scale=.6]{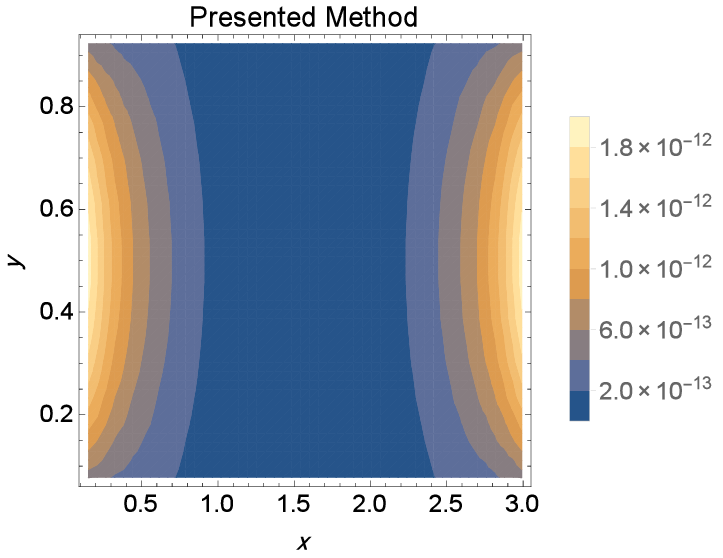}
\caption{\scriptsize{Distribution of the absolute error with
Gaussian RBF, $N=20\times12$ and $c=0.3041$ for RBF collocation and
$c=0.01$ for Presented method, for Example
\ref{ex3}.}}\label{fig_ex3_1}
\end{figure}
\begin{figure}
\centering
\includegraphics[scale=.8]{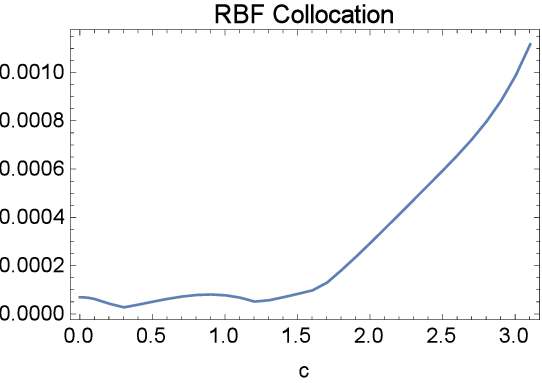}
\includegraphics[scale=.8]{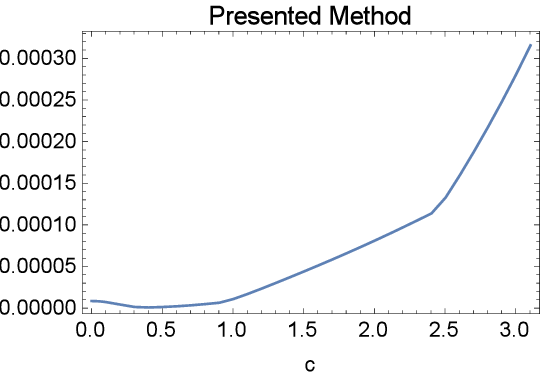}
\caption{\scriptsize{ Graphs of maximum absolute error versus shape
parameter with Gaussian RBF, $N=10\times6$, for Example
\ref{ex3}.}}\label{fig_ex3_2}
\end{figure}
The relative errors are presented in Table~\ref{tab_ex3_1} for
various values of $N$ and they are compared with the best reported
results in \cite{fassh_mixed} contain Kansa's and Hermit based RBF
method and RBF collocation method. $\rho_{RBFC}$ is relative error
of RBF collocation method with Gaussian RBF with $c=0.3041$ and
$\rho_{1}$ and $\rho_{2}$ are relative errors of presented method
with Gaussian RBF with $c=0.4041$ and $c=0.01$, respectively.
$\rho_{K}$ and $\rho_{H}$ are reported relative errors in
\cite{fassh_mixed} with optimal shape parameters for Kansa's and
Hermit based RBF method, respectively. Figure~\ref{fig_ex3_1} shows
the distribution of the absolute error of presented method and RBF
collocation method with Gaussian RBF, $N=20\times12$ with $c=0.3041$
for RBF collocation and $c=0.01$ for Presented method. The reported
results show that more accurate approximate solutions can be
obtained using more mesh points. Comparison of numerical results
show that the presented method is more accurate than the existing
RBF methods. Graphs of maximum absolute error versus shape parameter
with Gaussian RBF, $N=10\times6$ are given in Figure~\ref{fig_ex3_2}
which show that remain stable as shape parameter gets smaller in
contrast with the collocation radial basis functions methods.
\end{example}

\begin{example}\label{ex3_2}
Consider the Poisson's equation \cite{schaback2},
\begin{equation*}\label{ex3_2:01}
\Delta u=2 e^{x-y},\hspace{.5 cm} (x,y)\in[0,1]\times[0,1]
\end{equation*}
with the nonhomogeneous Dirichlet boundary conditions
\begin{equation*}\label{ex3_2:02}
\begin{array}{lll}
u(0,y)&=g_{1}(y),\hspace{.5 cm}u(1,y)&=g_{2}(y),\\
u(x,0)&=h_{1}(x),\hspace{.5 cm}u(x,1)&=h_{2}(x).\\
\end{array}
\end{equation*}
The exact solution is given by
\begin{equation*}\label{ex3_2:03}
u(x,y)=e^{x-y}+e^{x} \cos y.
\end{equation*}
\begin{table}
\centering \scriptsize{\begin{tabular}{ |c|c|c|c|c| }
  \hline
 N&$5\times5$& $10\times10$  & $15\times15$ & $20\times 20$ \\
 \hline
 RBF collocation&1.56591e-4&3.89263e-11 &8.55909e-19&4.57185e-27\\
 \hline
 Presented method&8.12108e-9&4.6856e-15 &3.36241e-23&1.92864e-32\\
 \hline
\end{tabular}}
\caption{\scriptsize{Maximum absolute errors, comparison of results
for Example \ref{ex3_2}.}}\label{tab_ex3_2_1}
\end{table}
\begin{figure}
\centering
\includegraphics[scale=.6]{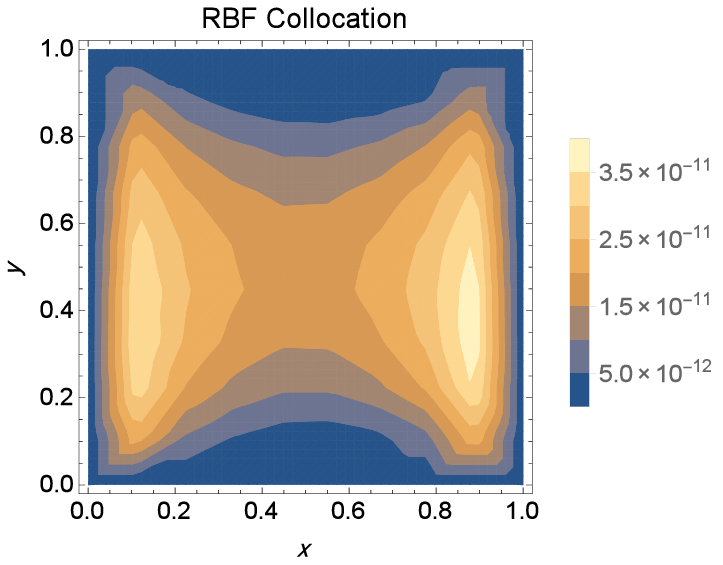}
\includegraphics[scale=.6]{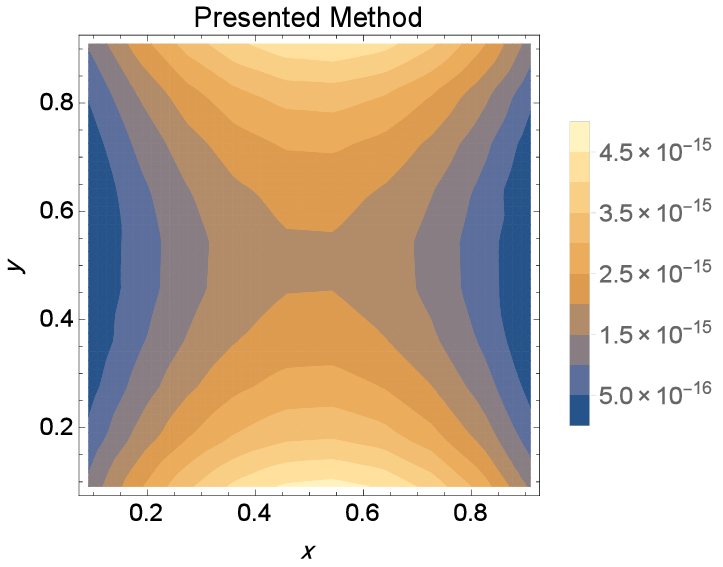}
\caption{\scriptsize{Distribution of the absolute error with
Gaussian RBF, $N=10\times10$ and $c=0.01$, for Example
\ref{ex3_2}.}}\label{fig_ex3_2_1}
\end{figure}
\begin{figure}
\centering
\includegraphics[scale=.8]{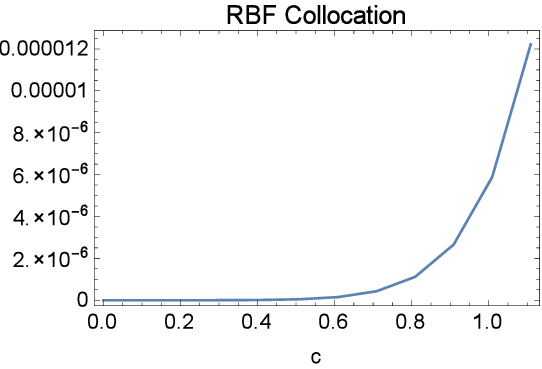}
\includegraphics[scale=.8]{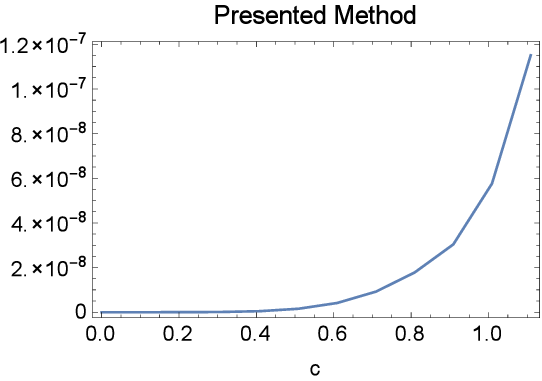}
\caption{\scriptsize{ Graphs of maximum absolute error versus shape
parameter with Gaussian RBF, $N=10\times10$, for Example
\ref{ex3_2}.}}\label{fig_ex3_2_2}
\end{figure}
The Maximum absolute errors are presented in Table~\ref{tab_ex3_2_1}
for various values of $N$. For comparison, the best result reported
in \cite{schaback2} has $1.28\times 10^{-4}$ maximum absolute error
with $81$ collocation points and $c=1.2$ shape parameter.
Figure~\ref{fig_ex3_2_1} shows the distribution of the absolute
error of presented method and RBF collocation method with Gaussian
RBF, $N=10\times10$ with $c=0.01$ for RBF collocation and Presented
method. The reported results show that more accurate approximate
solutions can be obtained using more mesh points. Comparison of
numerical results show that the presented method is more accurate
than the existing RBF methods. Graphs of maximum absolute error
versus shape parameter with Gaussian RBF, $N=10\times10$ are given
in Figure~\ref{fig_ex3_2_2}, which show that the presented method is
more accurate than RBF collocation method for various shape
parameters.
\end{example}

\begin{example}\label{ex4}
Consider the Poisson's equation \cite{fassh_mixed},
\begin{equation*}\label{ex4:01}
\Delta u=\sin x-\sin^{3}x,\hspace{.5 cm}
(x,y)\in[0,\frac{\pi}{2}]\times[0,2],
\end{equation*}
with the Dirichlet and Neumann boundary conditions
\begin{equation*}\label{ex4:02}
u(0,y)=u_{x}(\frac{\pi}{2},y)=u_{y}(x,0)=u_{y}(x,2)=0.
\end{equation*}
The exact solution is given by
\begin{equation*}\label{ex4:03}
u(x,y)=-\frac{1}{4} \sin(x)-\frac{1}{36} \sin(3x).
\end{equation*}
\begin{table}
\centering \scriptsize{\begin{tabular}{ |c|c|c|c|c|c| }
  \hline
 N&$\rho_{K}$\cite{fassh_mixed}& $\rho_{H}$\cite{fassh_mixed} & $\rho_{RBFC}$ & $\rho_{1}$&$\rho_{2}$\\
 \hline
 $5\times5$&2.181029e-2&4.327029e-2&1.56966e-2& 1.2886e-3&2.31536e-2\\
 \hline
 $7\times7$&6.910084e-3&1.871798e-4&7.45327e-3& 1.34064e-5&1.33894e-3\\
 \hline
$10\times10$&9.265197e-5&5.126676e-5&5.75242e-4& 3.29045e-8&5.32917e-6\\
 \hline
 $14\times14$&1.138751e-5&1.725526e-6&5.59595e-5& 4.62586e-11&1.74509e-9\\
 \hline
 $20\times20$&5.501057e-6&6.217559e-7&1.34064e-6& 8.15272e-17&1.42493e-15\\
 \hline
\end{tabular}}
\caption{\scriptsize{Relative errors, comparison of results for
Example \ref{ex4} and $c=0.4641,0.01$ for  presented method and
$c=0.5641$ RBF collocation..}}\label{tab_ex4_1}
\end{table}
\begin{figure}
\centering
\includegraphics[scale=.6]{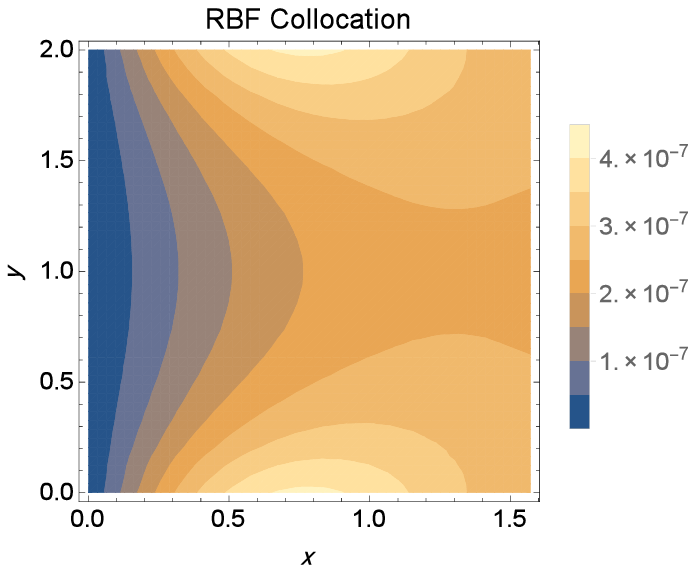}
\includegraphics[scale=.6]{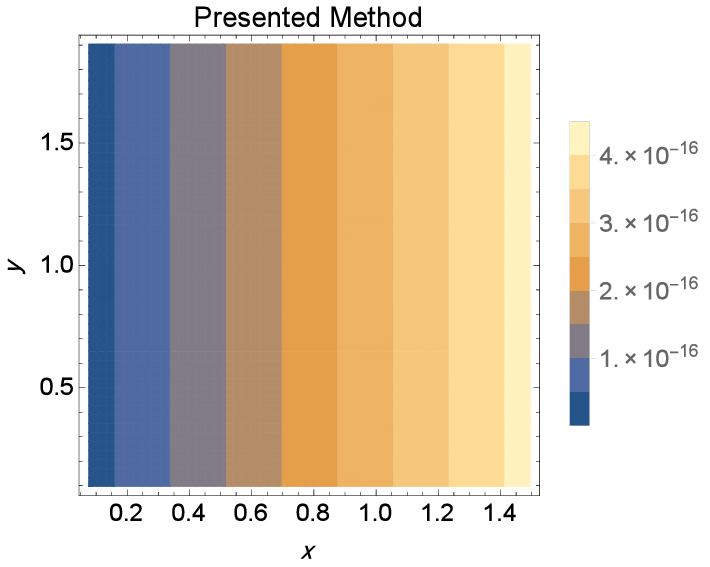}
\caption{\scriptsize{Distribution of the absolute error with
Gaussian RBF, $N=20\times20$ and $c=0.5641$ for RBF collocation and
$c=0.01$ for Presented method, for Example
\ref{ex4}.}}\label{fig_ex4_1}
\end{figure}
\begin{figure}
\centering
\includegraphics[scale=.8]{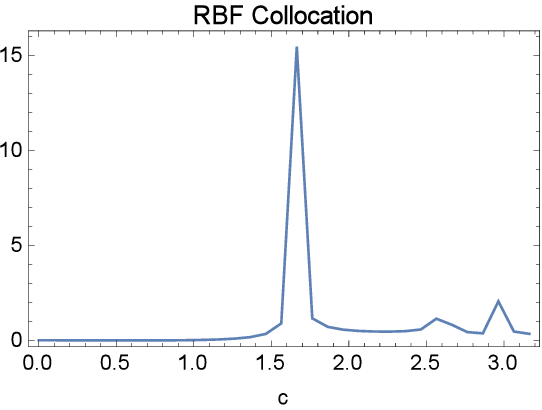}
\includegraphics[scale=.8]{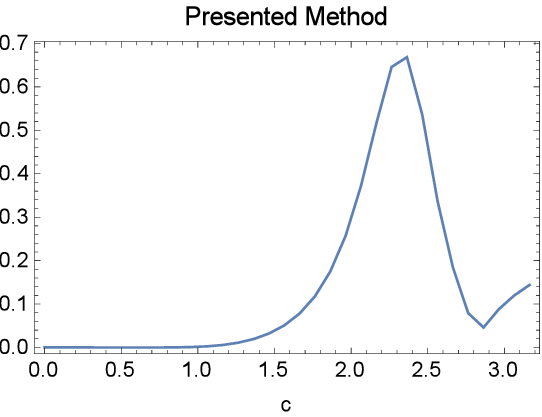}
\caption{\scriptsize{ Graphs of maximum absolute error versus shape
parameter with Gaussian RBF, $N=7\times7$, for Example
\ref{ex4}.}}\label{fig_ex4_2}
\end{figure}
For this example, the relative errors are presented in
Table~\ref{tab_ex4_1} for various values of $N$ and they are
compared with the best reported results in \cite{fassh_mixed}
contain Kansa's and Hermit based RBF method and RBF collocation
method. $\rho_{RBFC}$ is relative error of RBF collocation method
with Gaussian RBF with $c=0.5641$ and $\rho_{1}$ and $\rho_{2}$ are
relative errors of presented method with Gaussian RBF with
$c=0.4641$ and $c=0.01$, respectively. $\rho_{K}$ and $\rho_{H}$ are
reported relative errors in \cite{fassh_mixed} with optimal shape
parameters for Kansa's and Hermit based RBF method, respectively.
Figure~\ref{fig_ex4_1} shows the distribution of the absolute error
of presented method and RBF collocation method with Gaussian RBF,
$N=20\times20$ with $c=0.5641$ for RBF collocation and $c=0.01$ for
Presented method. The reported results show that more accurate
approximate solutions can be obtained using more mesh points.
Comparison of numerical results show that the presented method is
more accurate than the existing RBF methods. Graphs of maximum
absolute error versus shape parameter with Gaussian RBF,
$N=7\times7$ are given in Figure~\ref{fig_ex4_2}.
\end{example}
\begin{example}\label{ex5}
Consider the nonlocal multi--point Poisson's equation \cite{volkov},
\begin{equation*}\label{ex5:01}
\Delta u=f,\hspace{.5 cm} (x,y)\in[0,1]\times[0,2],
\end{equation*}
with the multi--point boundary conditions
\begin{equation*}\label{ex5:02}
\begin{array}{llll}
u(0,y)&=u(1,y)=0,\\
u(x,2)&=g(x),\\
u(x,0)&=\frac{1}{4}u(x,\frac{3}{5})+\frac{1}{2}u(x,\frac{6}{5})+\frac{1}{4}u(x,\frac{9}{5}).\\
\end{array}
\end{equation*}
The functions $f$ and $g$ are given such that the exact solution is,
\begin{equation*}\label{ex5:03}
u(x,y)=\frac{1}{500}\left((e^{\pi x}-1)(e^{\pi
x}-e^{\pi})\sin(\frac{5\pi}{6}y)+e^{\pi
y(\frac{3}{5}-y)(\frac{6}{5}-y)(\frac{9}{5}-y)}\sin(\pi x)\right).
\end{equation*}
\begin{table}
\centering \scriptsize{\begin{tabular}{ |c|c|c|c|c| }
  \hline
 N&$5\times10$& $8\times16$  & $10\times20$ & $12\times 24$ \\
 \hline
 RBF collocation&4.09711e-2&1.70686e-3 &9.94226e-5& 4.15599e-6\\
 \hline
 Presented method&5.47254e-3&1.09398e-4 &1.44713e-5& 2.80392e-6\\
 \hline
\end{tabular}}
\caption{\scriptsize{Maximum absolute errors, comparison of results
for Example \ref{ex5}.}}\label{tab_ex5_1}
\end{table}
\begin{figure}
\centering
\includegraphics[scale=.6]{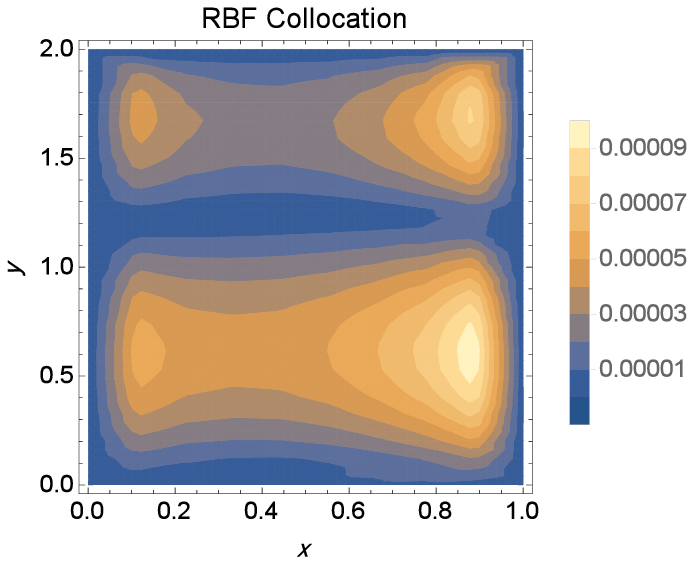}
\includegraphics[scale=.6]{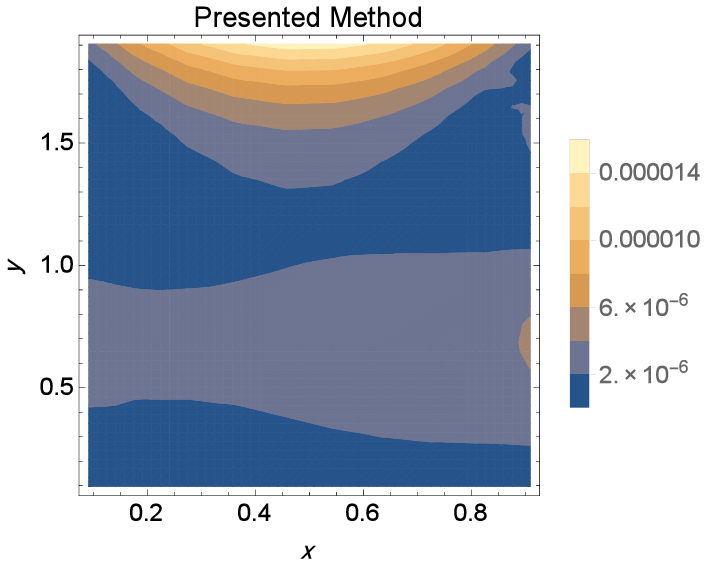}
\caption{\scriptsize{Distribution of the absolute error with
Gaussian RBF, $N=10\times20$ and $c=0.01$, for Example
\ref{ex5}.}}\label{fig_ex5_1}
\end{figure}
\begin{figure}
\centering
\includegraphics[scale=.8]{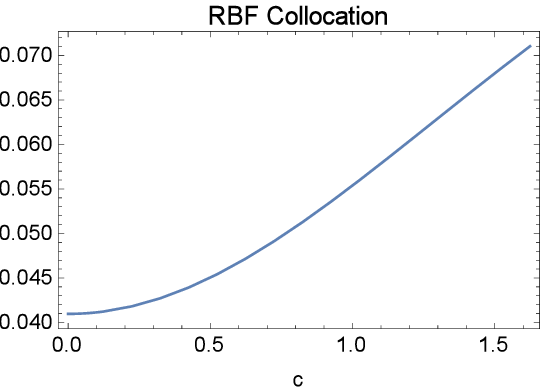}
\includegraphics[scale=.8]{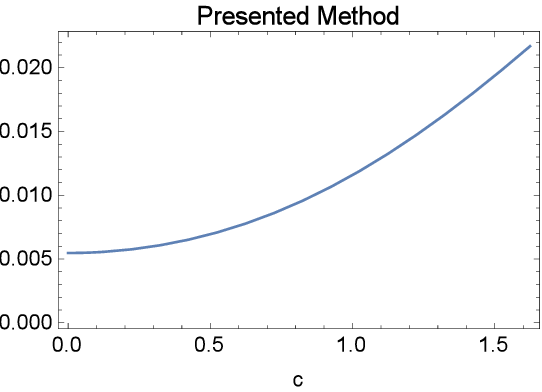}
\caption{\scriptsize{ Graphs of maximum absolute error versus shape
parameter with Gaussian RBF, $N=5\times10$, for Example
\ref{ex5}.}}\label{fig_ex5_2}
\end{figure}
The maximum absolute errors are presented in Table~\ref{tab_ex5_1}
for various values of $N$ and they are compared with the RBF
collocation method. The Gaussian RBF with $c=0.01$ is used for
presented method and RBF collocation method. Figure~\ref{fig_ex5_1}
shows the distribution of the absolute error of presented method and
RBF collocation method with Gaussian RBF, $N=10\times20$ and
$c=0.01$. The reported results show that more accurate approximate
solutions can be obtained using more mesh points. Comparison of
numerical results show that the presented method is more accurate
than the RBF collocation method. Graphs of maximum absolute error
versus shape parameter with Gaussian RBF, $N=5\times10$ are given in
Figure~\ref{fig_ex5_2}.
\end{example}
\begin{example}\label{ex6}
Consider the Poisson's equation \cite{tsai},
\begin{equation*}\label{ex6:01}
\Delta u(x,y,z)=\frac{6}{(4+x+y+z)^{3}},\hspace{.5 cm} (x,y,z)\in
\Omega=[-\frac{1}{2},\frac{1}{2}]^{3},
\end{equation*}
with the Dirichlet boundary conditions
\begin{equation*}\label{ex6:02}
u=g(\textbf{x}), \hspace{.5 cm} \textbf{x} \in \Gamma,
\end{equation*}
where $\Gamma$ is the boundary of $\Omega$ and $g$ is given such
that the exact solution is,
\begin{equation*}\label{ex6:03}
u(x,y,z)=\frac{1}{(4+x+y+z)}.
\end{equation*}
\begin{table}
\centering \scriptsize{\begin{tabular}{ |c|c|c|c|c| }
  \hline
 N&$4\times4\times4$& $5\times5\times5$  & $6\times 6\times 6$ & $7\times 7\times 7$ \\
 \hline
 RBF collocation&3.8223e-5&4.86452e-6 &6.73616e-7& 8.47629e-8\\
 \hline
 Presented method&1.02919e-7&1.49101e-8 &2.4369e-9& 3.43708e-10\\
 \hline
\end{tabular}}
\caption{\scriptsize{Maximum absolute errors, comparison of results
for Example \ref{ex6} with $N=7\times7\times7$ and
$c=0.01$.}}\label{tab_ex6_1}
\end{table}
\begin{figure}
\centering
\includegraphics[scale=.8]{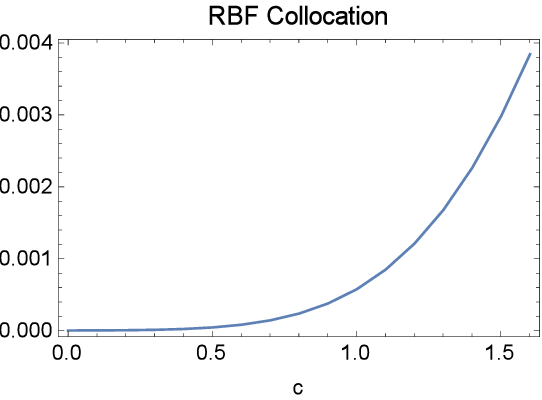}
\includegraphics[scale=.8]{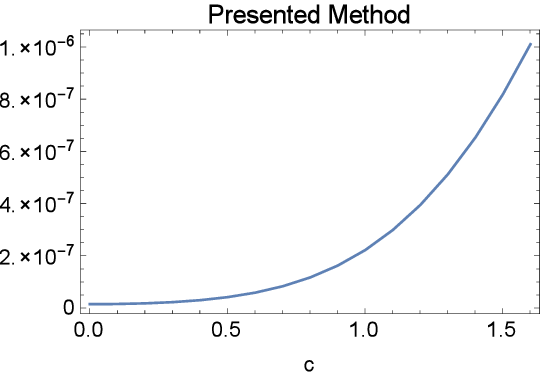}
\caption{\scriptsize{ Graphs of maximum absolute error versus shape
parameter with Gaussian RBF, $N=5\times5\times5$, for Example
\ref{ex6}.}}\label{fig_ex6_1}
\end{figure}
The maximum absolute errors are presented in Table~\ref{tab_ex6_1}
for various values of $N$ and they are compared with the RBF
collocation method. The Gaussian RBF with $c=0.01$ is used for
presented method and RBF collocation method. For comparison, the
best result reported in \cite{tsai} has $10^{-5}$ maximum absolute
error with $7\times7\times7$ points. The reported results show that
more accurate approximate solutions can be obtained using more mesh
points. Graphs of maximum absolute error versus shape parameter with
Gaussian RBF, $N=5\times5\times5$ are given in
Figure~\ref{fig_ex6_1}, which show that the presented method is more
accurate than RBF collocation method for various shape parameters.
\end{example}

\section{Conclusions}
In this paper, we introduce a new approach for the imposing various
boundary conditions on radial basis functions and their application
in pseudospectral radial basis function method. The various boundary
conditions such as Dirichlet, Neumann, Robin, mixed and multi--point
boundary conditions for one, two and three-dimensional problems,
have been considered. Here we propose a new technique to force the
radial basis functions to satisfy the boundary conditions exactly.
Some new kernels are constructed using general kernels in a manner
which satisfies required conditions and we prove that if the
reference kernel is positive definite then the newly constructed
kernel is positive definite, also. Furthermore, we show that the
collocation matrix is nonsingular if some conditions are satisfied.
It can improve the applications of existing methods based on radial
basis functions especially the pseudospectral radial basis function
method to handling the differential equations with more complicated
boundary conditions. Several examples with various boundary
conditions have been considered for validation of the proposed
technique and the results are compared with the RBF collocation
method and the best-reported results in the literature.


\bibliographystyle{amsplain}

\begin{thebibliography}{99}


\bibitem{roohani} S. Abbasbandy, H. Roohani Ghehsareh, I. Hashim, A. Alsaedi, A comparison study of meshfree techniques for solving the
 two-dimensional linear hyperbolic telegraph equation, Engineering Analysis with Boundary Elements 47 (2014), 10--20.
\bibitem{a1} S. Abbasbandy, H. Roohani Ghehsareh, I. Hashim, Numerical analysis of a mathematical model for
capillary formation in tumor angiogenesis using a meshfree method
based on the radial basis function, Engineering Analysis with
Boundary Elements 36(12) (2012), 1811--1818.
\bibitem{a2} S. Abbasbandy, H. Roohani Ghehsareh,  I. Hashim, A meshfree
method for the solution of two-dimensional cubic nonlinear
Schr\"{o}dinger equation, Engineering Analysis with Boundary
Elements 37(6) (2013), 885--898.
\bibitem{parand} K. Parand, S. Abbasbandy, S.  Kazem, A. R. Rezaei, Comparison between two common collocation approaches based on
radial basis functions for the case of heat transfer equations
arising in porous medium, Communications in Nonlinear Science and
Numerical Simulation 16(3) (2011), 1396-1407.

\bibitem{kansa} E. J. Kansa, R. C. Aldredge, L. Ling, Numerical
simulation of two-dimensional combustion using mesh-free methods,
Engineering analysis with boundary elements 33(7) (2009), 940-950.
\bibitem{shivanian1} E. Shivanian, A new spectral meshless radial point
interpolation (SMRPI) method: A well-behaved alternative to the
meshless weak forms, Engineering Analysis with Boundary Elements 54
(2015), 1--12.
\bibitem{shivanian2} E. Shivanian, Analysis of meshless local and spectral meshless
radial point interpolation (MLRPI and SMRPI) on 3-D nonlinear wave
equations, Ocean Engineering 89 (2014), 173--188.
\bibitem{roohani2} H. R. Ghehsareh,S. H. Bateni, A. Zaghian, A
meshfree method based on the radial basis functions for solution of
two-dimensional fractional evolution equation, Engineering Analysis
with Boundary Elements 61 (2015), 52--60.

\bibitem{p2} S. Kazem,  J. A. Rad and K. Parand, Radial basis functions methods
for solving Fokker–Planck equation, Engineering Analysis with
Boundary Elements 36(2) (2012), 181--189.

\bibitem{p3} J. A. Rad, K. Parand and S. Abbasbandy, Local weak
form meshless techniques based on the radial point interpolation
(RPI) method and local boundary integral equation (LBIE) method to
evaluate European and American options, Communications in Nonlinear
Science and Numerical Simulation 22(1) (2015), 1178--1200.

\bibitem{babak} S. Abbasbandy, B. Azarnavid and M. S. Alhuthali, A shooting
reproducing kernel Hilbert space method for multiple solutions of
nonlinear boundary value problems, Journal of Computational and
Applied Mathematics, 279 (2015): 293--305.

\bibitem{babak1_17} B. Azarnavid and K. Parand, An iterative reproducing kernel
 method in Hilbert space for the multi--point
boundary value problems, Journal of Computational and Applied
Mathematics, 328 (2018): 151--163.

\bibitem{babak3} S. Abbasbandy, B. Azarnavid, Some error estimates for
the reproducing kernel Hilbert spaces method, Journal of
Computational and Applied Mathematics, 296 (2016): 789--797.
\bibitem{babak2} B. Azarnavid, F. Parvaneh and S. Abbasbandy, Picard-Reproducing Kernel Hilbert Space Method for Solving Generalized Singular
 Nonlinear Lane-Emden Type Equations, Mathematical Modelling and Analysis, 20(6) (2015): 754--767.

\bibitem{babak5} B. Azarnavid, E. Shivanian, K. Parand and Soudabeh Nikmanesh, Multiplicity results by shooting reproducing kernel Hilbert space method for the
catalytic reaction in a flat particle, Journal of Theoretical and
Computational Chemistry 17(02) (2018): 1850020.


\bibitem{br2} B. Azarnavid, K. Parand and S. Abbasbandy, An iterative kernel
based method for fourth order nonlinear equation with nonlinear
boundary condition, Communications in Nonlinear Science and
Numerical Simulation, 59 (2018): 544--552.
\bibitem{br3} M. Emamjome, B. Azarnavid and H. Roohani
Ghehsareh, A reproducing kernel Hilbert space pseudospectral method
for numerical investigation of a two-dimensional capillary formation
model in tumor angiogenesis problem, Neural Computing and
Applications, (2017): 1--9.



\bibitem{arq6} O. A. Arqub, Fitted reproducing kernel Hilbert space method for
the solutions of some certain classes of time-fractional partial
differential equations subject to initial and Neumann boundary
conditions, Computers {\&} Mathematics with Applications, 73(6)
(2017): 1243--1261.

\bibitem{arq3} M. Al-Smadi, O. A. Arqub, N. Shawagfeh and S. Momani, Numerical investigations for systems of
second-order periodic boundary value problems using reproducing
kernel method, Applied Mathematics and Computation, 291 (2016):
137--148.
\bibitem{arq4} O. A. Arqub, Approximate solutions of DASs
with nonclassical  boundary conditions using novel reproducing
kernel algorithm, Fundamenta Informaticae, 146(3) (2016): 231--254.
\bibitem{arq5} O. A. Arqub, The reproducing kernel algorithm for handling
 differential algebraic systems of ordinary differential equations, Mathematical
Methods in the Applied
 Sciences, 39(15) (2016): 4549--4562.



\bibitem{akgul_r1} A. Akg\"{u}l and D. Baleanu, On solutions of variable-order fractional differential equations, An
International Journal of Optimization and Control: Theories {\&}
Applications (IJOCTA), 7(1) (2017): 112--116.



\bibitem{akgul_r7} M. G. Sakar, A. Akg\"{u}l and D. Baleanu,
On solutions of fractional Riccati differential equations, Advances
in Difference Equations, 2017(1) (2017): 39.


\bibitem{parand-amani} J. Amani Rad, Jamal K. Parand, Pricing American options under jump-diffusion models using local
weak form meshless techniques, International Journal of Computer
Mathematics (2016), 1--25.


\bibitem{fassh1} G. E. Fasshauer, RBF collocation methods as pseudospectral methods,
WIT transactions on modelling and simulation 39 (2005).
\bibitem{sarra} S. A. Sarra, Adaptive radial basis function methods for time
dependent partial differential equations, Applied Numerical
Mathematics 54(1) (2005), 79--94.
\bibitem{fassh2} A. J. M. Ferreira, G. E. Fasshauer, An RBF-Pseudospectral
approach for the static and vibration analysis of composite plates
using a higher-order theory, International journal for computational
methods in engineering science and mechanics 8(5) (2007), 323--339.
\bibitem{uddin1} M. Uddin, Marjan, RBF-PS scheme for solving the equal width equation, Applied Mathematics and Computation 222 (2013), 619--631.

\bibitem{fassh3} G. E. Fasshauer, J. G. Zhang, On choosing optimal shape parameters for RBF approximation, Numerical Algorithms 45 (2007), 345-–368.
\bibitem{fassh4} A. J. M. Ferreira, G. E. Fasshauer, Computation of natural
frequencies of shear deformable beams and plates by an
RBF-pseudospectral method, Computer Methods in Applied Mechanics and
Engineering 196(1) (2006), 134--146.


\bibitem{uddin2} M. Uddin and S. Ali, RBF-PS method and fourier pseudospectral method
for solving stiff nonlinear partial differential equations,
Mathematical Sciences Letters 2(1) (2012) 55–-61.

\bibitem{uddin3} M. Uddin and R.J. Ali, RBF-PS scheme for the numerical solution of the
complex modified Korteweg–de Vries equation, Applied Mathematics
{\&} Information Sciences Letters 1(1) (2012), 9--17.

\bibitem{forn1} E. Larsson and B. Fornberg, A numerical study of some
radial basis function based solution methods for elliptic PDEs,
Computers {\&} Mathematics with Applications 46(5) (2003), 891--902.

\bibitem{wend} H. Wendland. Scattered data approximation. Vol. 17. Cambridge
university press, 2004.


\bibitem{aron} N. Aronszajn, Theory of reproducing kernels, Transactions of the American mathematical society 68(3) (1950), 337--404.



\bibitem{ansari} A. R. Ansari and A. F. Hegarty, Numerical solution of a
convection diffusion problem with Robin boundary conditions, Journal
of computational and applied mathematics 156(1) (2003), 221--238.

\bibitem{yun} H. H. Yun, Z.C. Li and A. H. D. Cheng, Radial basis
collocation methods for elliptic boundary value problems, Computers
{\&} Mathematics with Applications 50(1) (2005), 289--320.


\bibitem{fassh_mixed} G. F. Fasshauer, Solving partial differential equations by
collocation with radial basis functions, In Proceedings of Chamonix,
vol. 1997, pp. 1--8. Vanderbilt University Press Nashville, TN,
1996.


\bibitem{schaback2} C. S. Chen, Y. C. Hon and R. A. Schaback, Scientific computing
with radial basis functions, Department of Mathematics, University
of Southern Mississippi, Hattiesburg, MS 39406 (2005).

\bibitem{volkov} E. A. Volkov and A. A. Dosiyev, On the numerical solution of a
multilevel nonlocal problem, Mediterranean Journal of Mathematics
(2016), 1--16.

\bibitem{tsai} C. C. Tsai, Generalized polyharmonic multiquadrics, Engineering Analysis with Boundary Elements 50 (2015), 239--248.


\end{thebibliography}

\end{document}